\renewcommand{\abstract}[1]{
	\begin{center}
	\parbox{13cm}{\small {\sc Abstract.} #1}
	\end{center}
	\smallskip
}
\newcommand{\address}[1]{\bigskip\small\noindent #1 \par}
\newcommand{\email}[1]{\small\noindent\textit{Email address}: \texttt{#1} \par}
\newtheorem{theo}{Theorem}[section]
\newtheorem{prop}[theo]{Proposition}
\newtheorem{lemm}[theo]{Lemma}
\newtheorem{Theo}{Theorem}
\theoremstyle{definition}
\newtheorem{exam}[theo]{Example}
\newtheorem{remark}[theo]{Remark}
\newcommand{\Z}{\mathbb{Z}}
\title{A homogeneous presentation of symmetric quandles\footnotemark}
\author{Yuta Taniguchi}
\date{}
\begin{document}

\maketitle
\footnotetext[\ast]{
{\bf keywords:} {quandle, symmetric quandle}\\
{\bf Mathematics Subject Classification 2010:} {57M25, 57M27}\\
{\bf Mathematics Subject Classification 2020:} {57K10, 57K12}
}

\abstract{A quandle is an algebraic structure whose axioms correspond to the Reidemeister moves of knot theory. S. Kamada introduced the notion of a quandle with a good involution, which was later called a symmetric quandle. We are interested in the algebraic structure of symmetric quandles. Given a group $G$, an element $z$ and a certain subgroup $H$, one can obtain the quandle. D. Joyce showed that every quandle is isomorphic to the disjoint union of such quandles. In this paper, given a group $G$, elements $z,r$ in $G$ and a certain subgroup $H$, we construct a symmetric quandle. Furthermore, we show that every symmetric quandle is isomorphic to the disjoint union of such quandles.
}

\section{Introduction}
A quandle \cite{Joy,Mat} is an algebraic structure whose axioms correspond to the Reidemeister moves of knot theory. Given an oriented $n$-dimensional link $L$, we have the link quandle of $L$ \cite{Joy,Mat}, which is a generalization of the link group $\pi_1(S^{n+2}\backslash L)$. J. S. Carter, D. Jelsovsky, S. Kamada, L. Langford and M. Saito introduced the homology group of quandles and defined invariants of $1$- or $2$-dimensional oriented links, which are called the quandle cocycle invariants \cite{CJKLS}.  We need the orientation of links to define the quandle cocycle invariants.

S. Kamada introduced the notion of a quandle with a good involution \cite{Kamada2007quan}, which was later called a symmetric quandle \cite{Kamada2010homo}, and associated a symmetric quandle to an $n$-dimensional unoriented (possibily nonorientable) link. In \cite{Kamada2010homo}, the quandle cocycle invariants of unoriented (possibily nonorientable) $2$-dimensional links were defined using homology groups of symmetric quandles. 

We are interested in the algebraic structure of symmetric quandles. Given a group $G$, an element $z$ and a certain subgroup $H$, one can obtain the quandle $(H\backslash G;z)$. D. Joyce showed that every quandle is isomorphic to the disjoint union of such quandles $(H\backslash G;z)$ \cite{Joy}. In this paper, given a group $G$, elements $z,r$ in $G$ and a certain subgroup $H$, we construct a symmetric quandle $(H\backslash G;z,r)$. Furthermore, we show that every symmetric quandle is isomorphic to the disjoint union of such quandles $(H\backslash G;z,r)$.
\begin{Theo}[Theorem~\ref{theo:main2}]
Every symmetric quandle is representable as $(\sqcup_{i\in I}H_i\backslash G;z_i,r_i\mid i\in I)$.
\end{Theo}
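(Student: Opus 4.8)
The plan is to follow D.\ Joyce's argument for quandles \cite{Joy}: represent the given symmetric quandle by letting its inner automorphism group act on it, take the subgroups $H_i$ to be point stabilizers, and then keep track of the good involution. So let $(X,*,\rho)$ be a symmetric quandle; for $y\in X$ write $S_y\in\mathrm{Sym}(X)$ for the map $x\mapsto x*y$, and put $G:=\mathrm{Inn}(X)=\langle S_y\mid y\in X\rangle\le\mathrm{Sym}(X)$, acting on $X$. The two good-involution axioms translate as follows: $\rho(x*y)=\rho(x)*y$ gives (i) $\rho S_y=S_y\rho$ for all $y$, so $\rho$ commutes with every element of $G$; and $x*\rho(y)=x*^{-1}y$ (with $*^{-1}$ the dual operation) gives (ii) $S_{\rho(y)}=S_y^{-1}$ for all $y$. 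I will also use the usual quandle identity $S_{g(b)}=g\,S_b\,g^{-1}$ for $g\in G$, equivalently $g^{-1}S_b\,g=S_{g^{-1}(b)}$.

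First I would fix the combinatorial data. By (i), $\rho$ carries each $G$-orbit onto a $G$-orbit and so induces an involution $\bar\rho$ on the set $\Lambda$ of connected components $\{Y_\lambda\}$ of $X$; moreover $\mathrm{Stab}_G(\rho(x))=\mathrm{Stab}_G(x)$ for every $x$. For each $\lambda$ choose a base point $x_\lambda\in Y_\lambda$, set $H_\lambda:=\mathrm{Stab}_G(x_\lambda)$ and $z_\lambda:=S_{x_\lambda}$, and (since $\rho(x_\lambda)\in Y_{\bar\rho(\lambda)}$) choose $r_\lambda\in G$ with $r_\lambda(x_{\bar\rho(\lambda)})=\rho(x_\lambda)$. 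Idempotency gives $z_\lambda\in H_\lambda$, and the conjugation identity gives $H_\lambda\le C_G(z_\lambda)$. Using (i), (ii) and the conjugation identity I would then verify, in order: $r_\lambda^{-1}(x_\lambda)=\rho(x_{\bar\rho(\lambda)})$; hence $r_\lambda^{-1}H_\lambda r_\lambda=H_{\bar\rho(\lambda)}$; $r_\lambda^{-1}z_\lambda r_\lambda=z_{\bar\rho(\lambda)}^{-1}$; and $r_\lambda r_{\bar\rho(\lambda)}\in H_\lambda$ (the relation making the induced involution square to the identity). These are precisely the hypotheses of the construction $(H\backslash G;z,r)$ in its disjoint-union form, so $(\sqcup_{\lambda\in\Lambda}H_\lambda\backslash G;z_\lambda,r_\lambda\mid\lambda\in\Lambda)$ is a symmetric quandle.

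Next I would write down the isomorphism. Define $\Phi\colon X\to\sqcup_{\lambda}H_\lambda\backslash G$ by sending $x\in Y_\lambda$ with $x=g(x_\lambda)$ to the class of $g$ in the $\lambda$-th summand; by the orbit–stabilizer theorem this is well defined and bijective. For $x=g(x_\lambda)$ and $y=g'(x_\mu)$ one has $x*y=\big(g'z_\mu(g')^{-1}g\big)(x_\lambda)$ since $S_{g'(x_\mu)}=g'z_\mu(g')^{-1}$, which shows $\Phi(x*y)=\Phi(x)*\Phi(y)$; and $\rho(x)=\big(g\,r_\lambda\big)(x_{\bar\rho(\lambda)})$ by (i) and the choice of $r_\lambda$, which shows $\Phi(\rho(x))=\rho(\Phi(x))$. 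Hence $\Phi$ is an isomorphism of symmetric quandles, completing the proof.

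The step I expect to be the genuine obstacle is structural rather than computational. Unlike in the plain quandle case, the good involution need not preserve the connected components of $X$: it may interchange them in pairs, and a pair $Y_\lambda\sqcup Y_{\bar\rho(\lambda)}$ that $\rho$ interchanges cannot in general be absorbed into a single homogeneous piece, since implementing the interchange by a group element would force a relation $r^{-1}z_\lambda r=z_\lambda^{-1}$ on an element that commutes with $z_\lambda$, impossible as soon as $z_\lambda^2\ne e$. This is why the index set $I=\Lambda$ must retain the involution $\bar\rho$ and why the family $(r_\lambda)$ must be chosen coherently, so that $r_\lambda$ transports the $\lambda$-datum to the $\bar\rho(\lambda)$-datum; it is also the reason to take $G=\mathrm{Inn}(X)$ rather than $\langle\mathrm{Inn}(X),\rho\rangle$, since only with the smaller group are the point stabilizers contained in $C_G(z_\lambda)$. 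Once this bookkeeping is in place, every condition demanded by the construction is forced by (i) and (ii), and the remaining verifications are routine.
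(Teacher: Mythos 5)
Your proof is correct and takes essentially the same route as the paper: decompose $Q$ into orbits under an automorphism group, take point stabilizers $H_i$, set $z_i=s_{q_i}$, let $\rho$ induce the involution $\kappa$ on the index set, choose transport elements $r_i$ realizing $\rho(q_i)$, verify the five coset-construction conditions from the two good-involution axioms, and check that Joyce's coset isomorphism intertwines the involutions. The only difference is your use of ${\rm Inn}(Q,\rho)$ in place of ${\rm Aut}(Q,\rho)$, which the remark following Theorem~\ref{theo:main2} notes works equally well.
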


 This paper is organized as follows. In section \ref{sec:def}, we review the definitions of a quandle and a good involution. In section \ref{sec:main}, we prove the main results.
\section*{Acknowledgements}
The author would like to thank Seiichi Kamada and Jumpei Yasuda for helpful advice and discussions on this research. The author would also like to thank the editor and the referee for many valuable comments and suggestions. This work was supported by JSPS KAKENHI Grant Number 21J21482.
\section{Symmetric quandle}
\label{sec:def}
A {\it quandle} \cite{Joy,Mat} is a set $Q$ with a binary operation $\ast:Q\times Q\to Q$ satisfying the following three axioms.
\vspace{-0.5\baselineskip}
\begin{itemize}
	\setlength{\itemsep}{0pt}
	\setlength{\parskip}{0pt}
\item[(Q$1$)] For any $a\in Q$, we have $a\ast a=a$.
\item[(Q$2$)] For any $b\in Q$, the map $s_b:Q\to Q$, $a\mapsto a\ast b$ is a bijection.
\item[(Q$3$)] For any $a,b,c\in Q$, we have $(a\ast b)\ast c=(a\ast c)\ast(b\ast c)$.
\end{itemize}

\vspace{-0.4\baselineskip}
These axioms correspond to Reidemeister moves in knot theory. By the second axiom, there exists the binary operation $\bar{\ast}:Q\times Q\to Q$ such that $(a\ast b)\bar{\ast}b=(a\bar{\ast}b)\ast b=a$ for any elements $a,b\in Q$. We call $\bar{\ast}$ the {\it dual operation} of $(Q,\ast)$.

Let $Q$ be a quandle. A map $\rho:Q\to Q$ is a {\it good involution} \cite{Kamada2007quan} if $\rho$ is an involution such that $\rho(a\ast b)=\rho(a)\ast b$ and $a\ast\rho(b)=a\bar{\ast}b$ for any $a,b\in Q$. Such a pair $(Q,\rho)$ is called a {\it symmetric quandle}. 

\begin{exam}
A quandle $Q$ is a {\it kei} or an {\it involutory quandle} if $(a\ast b)\ast b=a$ for any $a,b\in Q$. Then, the identity map on $Q$ is a good involution.
\end{exam}
\begin{exam}
Let $G$ be a group. We define an operation $\ast$ on $G$ by $a\ast b:=b^{-1}ab$. Then, ${\rm Conj}(G)=(G,\ast)$ is a quandle, which is called the {\it conjugation quandle} of $G$. The inversion ${\rm Inv}(G):G\to G;a\mapsto a^{-1}$ is a good involution. We call $({\rm Conj}(G),{\rm Inv}(G))$ the {\it conjugation symmetric quandle} of $G$. 
\end{exam}

Let $(Q,\rho)$ and $(Q^{\prime},\rho^{\prime})$ be symmetric quandles. A map $f:Q\to Q^{\prime}$ is a {\it quandle homomorphism} if $f(a\ast b)=f(a)\ast f(b)$ for any $a,b\in Q$. A quandle homomorphism $f:Q\to Q^{\prime}$ is a {\it quandle isomorphism} if it is a bijection. A quandle homomorphism $f:Q\to Q^{\prime}$ is a {\it symmetric quandle homomorphism} if $f\circ\rho=\rho^{\prime}\circ f$. A symmetric quandle homomorphism $f:Q\to Q^{\prime}$ is a {\it symmetric quandle isomorphism} or a {\it symmetric quandle automorphism} if it is a bijection, or if $(Q,\rho)=(Q^{\prime},\rho^{\prime})$ and it is a bijection, respectively. We denote the set of all symmetric quandle automorphisms of $(Q,\rho)$ by ${\rm Aut}(Q,\rho)$. Then, ${\rm Aut}(Q,\rho)$ forms a group by $f\cdot g(a)=g\circ f(a)$ for $f,g\in{\rm Aut}(Q,\rho)$ and $a\in Q$. Notice that $s_a$ is a symmetric quandle automorphism for any $a\in Q$. 

The group ${\rm Aut}(Q,\rho)$ acts on $Q$ from the right by $a\cdot f=f(a)$ for $a\in Q$ and $f\in {\rm Aut}(Q,\rho)$. A symmetric quandle $(Q,\rho)$ is {\it homogeneous} if the action of ${\rm Aut}(Q,\rho)$ on $Q$ is transitive.

\begin{lemm}
\label{lemm:symmetric qdle}
Let $(Q,\rho)$ be a symmetric quandle.\\
$(1)$ For any $a\in Q$ and $f\in{\rm Aut}(Q,\rho)$, we have $s_{a\cdot f}=f^{-1}s_af$.\\
$(2)$ For any $a\in Q$, we have $s_{\rho(a)}=s_a^{-1}$.
\end{lemm}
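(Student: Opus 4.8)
The plan is to verify both identities by evaluating the automorphisms in question on an arbitrary element $x\in Q$, being careful with the conventions fixed just above the statement: the product in ${\rm Aut}(Q,\rho)$ is the \emph{reversed} composition $f\cdot g=g\circ f$, the right action is $a\cdot f=f(a)$, and consequently the group-theoretic inverse of an automorphism coincides with its set-theoretic inverse. One should also note in passing that the maps $s_{a\cdot f}$, $f^{-1}s_af$ and $s_{\rho(a)}$ all genuinely belong to ${\rm Aut}(Q,\rho)$; this is already guaranteed by the remark that each $s_b$ is a symmetric quandle automorphism together with the fact that ${\rm Aut}(Q,\rho)$ is a group.

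For part $(1)$, I would first rewrite the group product $f^{-1}s_af$ as a composition of maps. Since the operation reverses composition order, $f^{-1}s_af$ equals $f\circ s_a\circ f^{-1}$ as a self-map of $Q$. Evaluating at $x\in Q$ and using that $f$ is a quandle homomorphism gives
\[
\bigl(f\circ s_a\circ f^{-1}\bigr)(x)=f\bigl(f^{-1}(x)\ast a\bigr)=f\bigl(f^{-1}(x)\bigr)\ast f(a)=x\ast f(a).
\]
On the other hand $s_{a\cdot f}(x)=x\ast(a\cdot f)=x\ast f(a)$, so the two maps agree, which is the claim.

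For part $(2)$, I would use the defining property of a good involution, $x\ast\rho(a)=x\bar{\ast}a$, to identify $s_{\rho(a)}$ with the map $x\mapsto x\bar{\ast}a$. By the definition of the dual operation $\bar{\ast}$ — namely $(x\ast a)\bar{\ast}a=(x\bar{\ast}a)\ast a=x$ — this map is exactly the inverse of $s_a\colon x\mapsto x\ast a$, and since the group inverse equals the functional inverse here, $s_{\rho(a)}=s_a^{-1}$ in ${\rm Aut}(Q,\rho)$.

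The only subtlety — and it is bookkeeping rather than a genuine obstacle — is keeping the composition and action conventions consistent throughout; once those are pinned down, both statements follow immediately from the quandle axioms, the definition of $\bar{\ast}$, and the good involution identities.
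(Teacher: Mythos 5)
Your argument is correct and is simply the detailed version of what the paper leaves implicit (its proof reads only ``This follows from the definitions''). You handle the reversed-composition convention for ${\rm Aut}(Q,\rho)$ correctly in part $(1)$ and use exactly the intended good-involution identity in part $(2)$, so nothing further is needed.
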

\begin{proof}
This follows from the definitions.
\end{proof}

\section{Main results}
\label{sec:main}
Let $G$ be a group, $H$ be a subgroup of $G$ and $z$ be an element of $G$. Assume that $H$ satisfies the following condition:
\begin{itemize}
\item For any $h\in H$, we have $z^{-1}hz=h$.
\end{itemize}
We define the binary operation $\ast$ on the set of right cosets $H\backslash G$ by $Hx\ast Hy:=Hxy^{-1}zy$ for $Hx,Hy\in H\backslash G$. Then, the pair $(H\backslash G,\ast)$ satisfies the conditions (Q$2$) and (Q$3$). Hence, the pair $(H\backslash G,\ast)$ is a {\it rack} \cite{fenn1992racks}. We note that the dual operation $\bar{\ast}$ is given by $Hx\bar{\ast}Hy:=Hxy^{-1}z^{-1}y$ for $Hx,Hy\in H\backslash G$.

Suppose that $H$ satisfies the following condition:
\begin{itemize}
\item We have $z\in H$.
\end{itemize}
Then, the pair $(H\backslash G,\ast)$ satisfies the condition (Q$1$), which implies that the pair $(H\backslash G,\ast)$ is a quandle (cf. \cite{Joy}), which is denoted by $(H\backslash G;z)$.

Furthermore, we assume that there exists an element $r\in G$ which satisfies the following conditions: 
\begin{itemize}
\item For any $h\in H$, we have $rhr^{-1}\in H$.
\item We have $r^2\in H$.
\item We have $r^{-1}zr=z^{-1}$
\end{itemize}

\begin{prop}
Let us define $\rho_r$ as follows: $\rho_r(Hx):=Hrx \ (Hx\in H\backslash G)$. Then, $\rho_r:H\backslash G\to H\backslash G$ is well defined and $\rho_r$ is a good involution.
\end{prop}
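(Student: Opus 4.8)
The plan is to verify each of the four required properties---well-definedness, the involution law, and the two good-involution identities---by a direct computation with right cosets, invoking the three hypotheses on $r$ one at a time. No auxiliary constructions are needed; everything follows from the definition of $\ast$ (and the description of $\bar\ast$ recalled just before the proposition) together with the coset arithmetic.

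First I would check that $\rho_r$ is well defined. If $Hx=Hx'$, then $x'=hx$ for some $h\in H$, so $rx'=(rhr^{-1})(rx)$; since $rhr^{-1}\in H$ by the first hypothesis on $r$, we get $Hrx'=Hrx$, so the value $\rho_r(Hx)=Hrx$ is independent of the chosen representative. Next I would show $\rho_r$ is an involution: $\rho_r^2(Hx)=\rho_r(Hrx)=Hr^2x=Hx$, where the last equality uses $r^2\in H$.

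It then remains to verify the two identities in the definition of a good involution. For the first, $\rho_r(Hx\ast Hy)=\rho_r(Hxy^{-1}zy)=Hrxy^{-1}zy=Hrx\ast Hy=\rho_r(Hx)\ast Hy$, which is immediate since left multiplication by $r$ does not interact with the right-twisting operation $\ast$. For the second, $Hx\ast\rho_r(Hy)=Hx\ast Hry=Hx(ry)^{-1}z(ry)=Hxy^{-1}(r^{-1}zr)y=Hxy^{-1}z^{-1}y=Hx\bar\ast Hy$, where the crucial step is replacing $r^{-1}zr$ by $z^{-1}$ using the third hypothesis on $r$, and the final equality is the stated formula for the dual operation.

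The main observation---rather than an obstacle---is that the three conditions imposed on $r$ are exactly what each check consumes: $rHr^{-1}\subseteq H$ yields well-definedness, $r^2\in H$ yields the involution property, the homomorphism-type identity $\rho_r(a\ast b)=\rho_r(a)\ast b$ is automatic, and $r^{-1}zr=z^{-1}$ turns $\ast$ by $\rho_r(b)$ into $\bar\ast$ by $b$. The only point requiring mild care is bookkeeping the placement of $r$ in the cosets, since $\rho_r$ acts by left multiplication while the quandle operation twists on the right; once the expressions are written out this is routine.
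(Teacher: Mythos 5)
Your proof is correct and follows essentially the same route as the paper's: well-definedness from $rhr^{-1}\in H$, the involution law from $r^2\in H$, the compatibility $\rho_r(a\ast b)=\rho_r(a)\ast b$ by direct computation, and the identity $a\ast\rho_r(b)=a\bar{\ast}b$ from $r^{-1}zr=z^{-1}$. The coset manipulations match the paper's step for step.
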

\begin{proof}
If $Hx_1=Hx_2$, there is an element $h\in H$ such that $x_1=hx_2$. Since $rhr^{-1}\in H$, we have $Hrx_1=Hrhx_2=Hrhr^{-1}rx_2=Hrx_2$. Thus, $\rho_r$ is well defined.

Since $r^2\in H$, we have $\rho_r^2(Hx)=\rho_r(Hrx)=Hr^2x=Hx$ for any $Hx\in H\backslash G$, which implies that $\rho_r$ is an involution. For any $Hx,Hy\in H\backslash G$, it holds that 
 \begin{align*}
 \rho_r(Hx\ast Hy)&=\rho_r(Hxy^{-1}zy)=Hrxy^{-1}zy\\
 &=Hrx\ast Hy=\rho_r(Hx)\ast Hy
 \end{align*}
 and 
 \begin{align*}
 Hx\ast\rho_r(Hy)& = Hx\ast Hry =Hx(ry)^{-1}zry\\
 &=Hxy^{-1}r^{-1}zry=Hxy^{-1}z^{-1}y\\
 &=Hx\bar{\ast} Hy.
 \end{align*}
 Hence, $\rho_r$ is a good involution.
\end{proof}
Let us denote this symmetric quandle $((H\backslash G;z),\rho_r)$ by $(H\backslash G;z,r)$.

\begin{theo}
\label{theo:main1}
 Every homogeneous symmetric quandle is representable as $(H\backslash G;z,r)$. 
\end{theo}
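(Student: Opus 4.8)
The plan is to imitate D.\ Joyce's representation of a homogeneous quandle, using the full symmetric-quandle automorphism group as the ambient group and encoding the good involution through a careful choice of the element $r$. Fix a base point $a_0\in Q$ and put $G:={\rm Aut}(Q,\rho)$; since $(Q,\rho)$ is homogeneous, $G$ acts on $Q$ transitively from the right via $a\cdot f=f(a)$. Let $H:=\{f\in G\mid a_0\cdot f=a_0\}$ be the stabiliser of $a_0$, let $z:=s_{a_0}\in G$, and choose any $r\in G$ with $a_0\cdot r=\rho(a_0)$, which exists by transitivity. I claim that $G,H,z,r$ satisfy the hypotheses needed to build $(H\backslash G;z,r)$ and that the map $\phi\colon H\backslash G\to Q$, $Hx\mapsto a_0\cdot x$, is a symmetric quandle isomorphism.

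First I would verify the conditions on $(G,H,z)$. We have $a_0\cdot z=a_0\ast a_0=a_0$ by (Q$1$), so $z\in H$. If $h\in H$, then $h$ is a quandle automorphism fixing $a_0$, so $h(a\ast a_0)=h(a)\ast a_0$ for all $a$; rewriting this inside $G$ yields $z^{-1}hz=h$. Hence $(H\backslash G;z)$ is a quandle by the discussion preceding the Proposition. Next I would check the three conditions on $r$: for $h\in H$, using that every element of $G$ commutes with $\rho$ and that $h$ fixes $a_0$ one gets $a_0\cdot(rhr^{-1})=a_0$, so $rhr^{-1}\in H$; similarly $a_0\cdot r^2=a_0$ because $r$ commutes with $\rho$ and $\rho^2={\rm id}$, so $r^2\in H$. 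The crucial relation is $r^{-1}zr=z^{-1}$: by Lemma~\ref{lemm:symmetric qdle}$(1)$ we have $r^{-1}zr=s_{a_0\cdot r}=s_{\rho(a_0)}$, and by Lemma~\ref{lemm:symmetric qdle}$(2)$, $s_{\rho(a_0)}=s_{a_0}^{-1}=z^{-1}$. This is exactly the point where the good-involution axiom is used. Consequently $\rho_r$ is defined and $(H\backslash G;z,r)$ is a symmetric quandle.

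It then remains to check that $\phi$ is a symmetric quandle isomorphism. Well-definedness (if $Hx=Hx'$ then $x'=hx$ for some $h\in H$, whence $a_0\cdot x'=a_0\cdot x$) and bijectivity (orbit--stabiliser together with transitivity) are routine. For the quandle-homomorphism property one unwinds $Hx\ast Hy=Hxy^{-1}zy$, using $z=s_{a_0}$ and the fact that each element of $G$ acts as a quandle automorphism, to obtain $\phi(Hx\ast Hy)=(a_0\cdot x)\ast(a_0\cdot y)$. For compatibility with the involutions, $\phi(\rho_r(Hx))=\phi(Hrx)=a_0\cdot(rx)$, and since $x\in G$ commutes with $\rho$ and $a_0\cdot r=\rho(a_0)$, this equals $\rho(a_0\cdot x)=\rho(\phi(Hx))$. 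Hence $\phi$ is the desired isomorphism.

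The one genuinely new ingredient beyond the quandle case is the element $r$. The tempting choice of adjoining $\rho$ itself to the group fails, because $\rho$ commutes with every $s_b$ and would give $r^{-1}zr=z$ rather than $z^{-1}$; the correct move is to keep $G={\rm Aut}(Q,\rho)$ and take $r$ to be any automorphism sending $a_0$ to $\rho(a_0)$, so that the inversion $z\mapsto z^{-1}$ is delivered by Lemma~\ref{lemm:symmetric qdle}$(2)$. I expect the only real care needed is bookkeeping with the conventions $f\cdot g(a)=g(f(a))$ and $a\cdot f=f(a)$ when translating the defining relations and the homomorphism identities in and out of $G$; the remainder is a direct verification.
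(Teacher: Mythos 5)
Your proposal is correct and follows essentially the same route as the paper: take $G={\rm Aut}(Q,\rho)$, the stabiliser $H$ of a base point, $z=s_{a_0}$, and any $r\in G$ carrying $a_0$ to $\rho(a_0)$, with the key identity $r^{-1}zr=z^{-1}$ obtained from Lemma~\ref{lemm:symmetric qdle}. The only cosmetic difference is that you sketch the verification that $Hx\mapsto a_0\cdot x$ is a quandle isomorphism directly, whereas the paper cites Joyce for that step.
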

\begin{proof}
Let $(Q,\rho)$ be a homogeneous symmetric quandle, $q$ be an element of $Q$ and $G$ be the symmetric automorphism group ${\rm Aut}(Q,\rho)$. Let us define the subgroup $H\subset G$ by $\{ x\in G\mid q\cdot x=q\}$. 

Since $(Q,\rho)$ is homogeneous, the action of $G$ is transitive. Thus, there exists an element $g\in G$ such that $q\cdot g=\rho(q)$. We will see that $G,H,z:=s_q$ and $r:=g$ satisfy the conditions as stated above. 

Let $h$ be an element of $H$. Since $q\cdot h=q$, we have $h^{-1}s_q h=s_{q\cdot h}=s_q$, which implies that $s_{q}^{-1}hs_q=h$. The condition $s_q\in H$ is clear. By the definition of symmetric quandle isomorphisms, it holds that $\rho(a)\cdot f=\rho(a\cdot f)$ for any $a\in Q$ and $f\in {\rm Aut}(Q,\rho)$. Thus, we have 
\begin{align*}
q\cdot(ghg^{-1})&=\rho(q)\cdot(hg^{-1})=\rho(q\cdot h)\cdot g^{-1}\\
&=\rho(q)\cdot g^{-1}=q,
\end{align*}
which implies that $ghg^{-1}\in H$. Since $q\cdot(g^2)=\rho^2(q)=q$, we have $g^2\in H$. By lemma~\ref{lemm:symmetric qdle}, we have $g^{-1}s_{q}g=s_{q\cdot g}=s_{\rho(q)}=s_{q}^{-1}$. Hence, $G,H,z$ and $r$ satisfy the sufficient conditions that precede proposition 3.1. This implies that the symmetric quandle $(H\backslash G;z,r)$ can be defined. We show that $(Q,\rho)$ is symmetric quandle isomorphic to $(H\backslash G;z,r)$.

It is known that $\psi:H\backslash G\to Q: Hx\mapsto q\cdot x$ is a quandle isomorphism \cite{Joy}. For any $Hx\in H\backslash G$, we have
\begin{align*}
\psi\circ\rho_{r}(Hx)&=\psi(Hrx) =q\cdot(rx)\\
&=\rho(q)\cdot x=\rho(q\cdot x)\\
&=\rho\circ\psi(Hx).
\end{align*}
Hence, $\psi$ is a symmetric quandle isomorphism.
\end{proof}
Let $G$ be a group, $I$ be an index set, $H_i (i\in I)$ be subgroups of $G$ and $z_i(i\in I)$ be elements of $G$. Assume that $H_i$ satisfies the following condition for each $i\in I$:
\begin{itemize}
\item For any $h_i\in H_i$, we have $z_i^{-1}h_iz_i=h_i$.
\end{itemize}
We define the binary operation $\ast$ on the disjoint union of the set of right cosets $\sqcup_{i\in I}H_i\backslash G$ by $H_ix\ast H_jy:=H_ixy^{-1}z_jy$ for $H_ix,H_jy\in\sqcup_{i\in I}H_i\backslash G$. Then, the pair $(\sqcup_{i\in I}H_i\backslash G,\ast)$ satisfies the conditions (Q$2$) and (Q$3$). Hence, the pair $(\sqcup_{i\in I}H_i\backslash G,\ast)$ is a rack. Notice that the dual operation $\bar{\ast}$ is given by $H_ix\bar{\ast} H_jy:=H_ixy^{-1}z_j^{-1}y$ for $H_ix,H_jy\in\sqcup_{i\in I}H_i\backslash G$.

Suppose that $H_i$ satisfies the following condition for each $i\in I$:
\begin{itemize}
\item We have $z_i\in H_i$.
\end{itemize}
Then, the pair $(\sqcup_{i\in I}H_i\backslash G,\ast)$ satisfies the condition (Q$1$), which implies that the pair $(\sqcup_{i\in I}H_i\backslash G,\ast)$ is a quandle (cf. \cite{Joy}), which is denoted by $(\sqcup_{i\in I}H_i\backslash G;z_i\mid i\in I)$.

 Furthermore, we assume that there exist an involution $\kappa:I\to I$ and elements $r_i\in G (i\in I)$ which satisfy the following conditions:
\begin{itemize}
\item For any $i\in I$ and $h_i\in H_i$, we have $r_ih_ir_i^{-1}\in H_{\kappa(i)}$.
\item For any $i\in I$, we have $r_{\kappa(i)}r_i\in H_i$.
\item For any $i\in I$, we have $z_i^{-1}=r_i^{-1}z_{\kappa(i)}r_i$.
\end{itemize}

\begin{prop}
 We put ${\bm r}:=(r_i\mid i\in I)$. Let us define $\rho_{\bm r}$ as follows: $\rho_{\bm r}(H_ix):=H_{\kappa(i)}r_ix\ (H_ix\in\sqcup_{i\in I}H_i\backslash G)$. Then, $\rho_{\bm r}:\sqcup_{i\in I}H_i\backslash G\to\sqcup_{i\in I}H_i\backslash G$ is well defined and $\rho_{\bm r}$ is a good involution.
\end{prop}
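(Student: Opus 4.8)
The plan is to follow the proof of the preceding Proposition verbatim, inserting the index involution $\kappa$ wherever bookkeeping demands it. Concretely, I would verify in turn: (i) that $\rho_{\bm r}$ is well defined; (ii) that $\rho_{\bm r}$ is an involution; (iii) that $\rho_{\bm r}(a\ast b)=\rho_{\bm r}(a)\ast b$ for all $a,b\in\sqcup_{i\in I}H_i\backslash G$; and (iv) that $a\ast\rho_{\bm r}(b)=a\bar{\ast}b$ for all such $a,b$. Together (i)--(iv) are exactly the assertion that $\rho_{\bm r}$ is a good involution.

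For (i), if $H_ix_1=H_ix_2$ then $x_1=hx_2$ with $h\in H_i$, and the hypothesis $r_iH_ir_i^{-1}\subseteq H_{\kappa(i)}$ gives $H_{\kappa(i)}r_ix_1=H_{\kappa(i)}(r_ihr_i^{-1})r_ix_2=H_{\kappa(i)}r_ix_2$; this computation also records that $\rho_{\bm r}$ carries the $i$-th component into the $\kappa(i)$-th component. For (ii), applying $\rho_{\bm r}$ twice to $H_ix$ produces $H_{\kappa(\kappa(i))}r_{\kappa(i)}r_ix$, which equals $H_ix$ because $\kappa$ is an involution and $r_{\kappa(i)}r_i\in H_i$.

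For (iii), the point is that $\ast$ only alters the left coset representative by right multiplication, whereas $\rho_{\bm r}$ only left-multiplies it by $r_i$ (and relabels the index by $\kappa$); since left and right multiplication in $G$ commute, the computation is formally identical to the connected case already treated. For (iv), the one genuinely new feature is that $\rho_{\bm r}(H_jy)=H_{\kappa(j)}r_jy$ lies in the $\kappa(j)$-th component, so evaluating $\ast$ against it inserts $z_{\kappa(j)}$ rather than $z_j$; expanding $H_ix\ast(H_{\kappa(j)}r_jy)=H_ixy^{-1}(r_j^{-1}z_{\kappa(j)}r_j)y$ and invoking the third hypothesis $r_j^{-1}z_{\kappa(j)}r_j=z_j^{-1}$ turns this into $H_ixy^{-1}z_j^{-1}y=H_ix\bar{\ast}H_jy$. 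I do not expect any real obstacle; the only thing requiring care is consistent tracking of which component each coset occupies, since that is what determines which $z_\bullet$ and which $r_\bullet$ appear when the definitions of $\ast$, $\bar{\ast}$, and $\rho_{\bm r}$ are unwound.
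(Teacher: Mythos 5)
Your proposal is correct and follows the paper's proof essentially verbatim: the same four verifications, the same use of $r_ih_ir_i^{-1}\in H_{\kappa(i)}$ for well-definedness, $r_{\kappa(i)}r_i\in H_i$ together with $\kappa^2=\mathrm{id}$ for the involution property, and $r_j^{-1}z_{\kappa(j)}r_j=z_j^{-1}$ for the compatibility with the dual operation. The only difference is that you describe step (iii) in words rather than writing out the one-line computation $\rho_{\bm r}(H_ixy^{-1}z_jy)=H_{\kappa(i)}r_ixy^{-1}z_jy=H_{\kappa(i)}r_ix\ast H_jy$, but the justification you give is exactly what makes that line work.
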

\begin{proof}
 If $H_ix_1=H_ix_2$, there exists an element $h_i\in H_i$ such that $x_1=h_ix_2$. Since $r_ih_ir_i^{-1}\in H_{\kappa(i)}$, we see that $H_{\kappa(i)}r_ix_1=H_{\kappa(i)}r_ih_ix_2=H_{\kappa(i)}r_ih_ir_i^{-1}r_ix_2=H_{\kappa(i)}r_ix_2$. Hence, $\rho_{\bm r}$ is well defined.

Since $r_{\kappa(i)}r_i\in H_i$ for any $i\in I$, it holds that $\rho_{\bm r}^2(H_ix)=\rho_{\bm r}(H_{\kappa(i)}r_ix)=H_ir_{\kappa(i)}r_ix=H_ix$ for any $H_ix\in\sqcup_{i\in I}H_i\backslash G$. Thus, $\rho_{\bm r}$ is an involution. 

For any $H_ix,H_jy\in\sqcup_{i\in I}H_i\backslash G$, we have
\begin{align*}
\rho_{\bm r}(H_ix\ast H_jy)&=\rho_{\bm r}(H_ixy^{-1}z_jy)=H_{\kappa(i)}r_ixy^{-1}z_jy\\
&=H_{\kappa(i)}r_ix\ast H_jy=\rho_{\bm r}(H_ix)\ast H_jy
\end{align*}
and 
\begin{align*}
H_ix\ast\rho_{\bm r}(H_jy)&=H_ix\ast H_{\kappa(j)}r_jy=H_ix(r_jy)^{-1}z_{\kappa(j)}r_jy\\
&=H_ixy^{-1}r_j^{-1}z_{\kappa(j)}r_jy=H_ixy^{-1}z_j^{-1}y\\
&=H_ix\bar{\ast}H_jy.
\end{align*}
Hence, $\rho_{\bm r}$ is a good involution.
\end{proof}
Let us denote this symmetric quandle $((\sqcup_{i\in I}H_i\backslash G;z_i\mid i\in I),\rho_{\bm r})$ by $(\sqcup_{i\in I}H_i\backslash G;z_i,r_i\mid i\in I)$.
\begin{theo}
\label{theo:main2}
Every symmetric quandle is representable as $(\sqcup_{i\in I}H_i\backslash G;z_i,r_i\mid i\in I)$.
\end{theo}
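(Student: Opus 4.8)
The plan is to run the argument of Theorem~\ref{theo:main1} orbit by orbit. Let $(Q,\rho)$ be a symmetric quandle and put $G:={\rm Aut}(Q,\rho)$, acting on $Q$ from the right as above. Write $Q=\sqcup_{i\in I}O_i$ for the decomposition of $Q$ into $G$-orbits. The first point to record is that, although $\rho$ itself need not belong to $G$, it still permutes the orbits: by the defining property $\rho(a)\cdot f=\rho(a\cdot f)$ of a symmetric quandle automorphism, if $b=a\cdot f$ for some $f\in G$ then $\rho(b)=\rho(a)\cdot f$, so $\rho$ carries $O_i$ onto the single orbit of $\rho(q_i)$. Since $\rho$ is an involution, this yields an involution $\kappa:I\to I$ determined by $\rho(O_i)=O_{\kappa(i)}$.

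Next, using the axiom of choice, I would fix a representative $q_i\in O_i$ for each $i\in I$ and set $H_i:=\{x\in G\mid q_i\cdot x=q_i\}$ and $z_i:=s_{q_i}$; recall that $s_{q_i}\in G$. Because $\rho(q_i)\in O_{\kappa(i)}$ and $q_{\kappa(i)}$ lies in that same orbit, there is an $r_i\in G$ with $q_{\kappa(i)}\cdot r_i=\rho(q_i)$, which I fix. It then remains to verify the six conditions needed to form $(\sqcup_{i\in I}H_i\backslash G;z_i,r_i\mid i\in I)$. The three conditions $z_i^{-1}h_iz_i=h_i$ for $h_i\in H_i$, $z_i\in H_i$, and $z_i^{-1}=r_i^{-1}z_{\kappa(i)}r_i$ are obtained exactly as in the proof of Theorem~\ref{theo:main1}, using Lemma~\ref{lemm:symmetric qdle}, axiom (Q$1$) applied to $q_i$, and the identity $q_{\kappa(i)}\cdot r_i=\rho(q_i)$. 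For $r_ih_ir_i^{-1}\in H_{\kappa(i)}$ one computes $q_{\kappa(i)}\cdot(r_ih_ir_i^{-1})$ step by step, using $\rho(q_i\cdot h_i)=\rho(q_i)\cdot h_i$; for $r_{\kappa(i)}r_i\in H_i$ one computes $q_i\cdot(r_{\kappa(i)}r_i)$, now also invoking that $\kappa$ is an involution (so that $q_i\cdot r_{\kappa(i)}=\rho(q_{\kappa(i)})$) together with $\rho(q_{\kappa(i)}\cdot r_i)=q_i$. Both are short and direct.

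Having the presentation in hand, I would produce the isomorphism. As in the proof of Theorem~\ref{theo:main1} (cf.\ \cite{Joy}), for each $i$ the map $\psi_i:H_i\backslash G\to O_i$, $H_ix\mapsto q_i\cdot x$, is a quandle isomorphism onto the subquandle $O_i$; taking disjoint unions gives a bijection $\psi:\sqcup_{i\in I}H_i\backslash G\to Q$. A one-line computation with Lemma~\ref{lemm:symmetric qdle}$(1)$ — namely $s_{q_j\cdot y}=y^{-1}z_jy$ — shows $\psi(H_ix\ast H_jy)=\psi(H_ix)\ast\psi(H_jy)$ for all $i,j\in I$ (the case $i\neq j$ is no harder, since on both sides the first argument stays in its own component), so $\psi$ is a quandle isomorphism. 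Finally $\psi(\rho_{\bm r}(H_ix))=q_{\kappa(i)}\cdot(r_ix)=(q_{\kappa(i)}\cdot r_i)\cdot x=\rho(q_i)\cdot x=\rho(q_i\cdot x)=\rho(\psi(H_ix))$, so $\psi\circ\rho_{\bm r}=\rho\circ\psi$ and $\psi$ is a symmetric quandle isomorphism, as required.

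I do not expect a genuine obstacle here: the statement is a direct generalization of Theorem~\ref{theo:main1}, and once the orbit decomposition and the involution $\kappa$ are set up, everything reduces to the single-orbit computations already carried out there. The one place demanding care is the verification of $r_{\kappa(i)}r_i\in H_i$, which is the only condition in which both the involutivity of $\kappa$ and the choice of the partner element $r_{\kappa(i)}$ intervene; and, throughout, one must respect the convention $f\cdot g(a)=g\circ f(a)$ for composition in ${\rm Aut}(Q,\rho)$ when manipulating products of automorphisms.
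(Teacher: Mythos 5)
Your proposal is correct and follows essentially the same route as the paper: orbit decomposition under ${\rm Aut}(Q,\rho)$, the induced involution $\kappa$ on the index set, stabilizers $H_i$, $z_i=s_{q_i}$, $r_i$ carrying $q_{\kappa(i)}$ to $\rho(q_i)$, and the standard coset-to-orbit map $\psi$. Your only addition is the explicit check that $\rho$ permutes the $G$-orbits (so that $\kappa$ is well defined), a point the paper leaves implicit.
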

\begin{proof}
Let $(Q,\rho)$ be a symmetric quandle and $G$ be the symmetric automorphism group ${\rm Aut}(Q,\rho)$. Let $Q=\sqcup_{i\in I}Q_i$ be the orbit decomposition by the action of $G$ on $Q$. For each $i\in I$, we fix an element $q_i\in Q_i$. Let $H_i$ be the subgroup of $G$ defined by $\{x\in G\mid q_i\cdot x=q_i\}$ for each $i\in I$.

Let $\kappa:I\to I$ be the involution which satisfies $\rho(q_i)\in Q_{\kappa(i)}$ for any $i\in I$. Since $G$ acts on $Q_i$ transitively, there exists $g_i\in G$ such that $\rho(q_i)=q_{\kappa(i)}\cdot g_i$ for each $i\in I$. We will see the $G,H_i(i\in I),z_i:=s_{q_i}(i\in I),\kappa$ and $r_i:=g_i (i\in I)$ satisfy the conditions as stated above. 

Let $i$ be an element of $I$ and $h_i$ be an element of $H_i$. Since $q_i\cdot h_i=q_i$, it holds that $h_i^{-1}s_{q_i}h_i=s_{q_i\cdot h_i}=s_{q_i}$, which implies that $s_{q_i}^{-1}h_is_{q_i}=h_i$. The condition $s_{q_i}\in H_i$ is clear. By the definition of symmetric quandle isomorphisms, we have
\begin{align*}
q_{\kappa(i)}\cdot(g_ih_ig_i^{-1})&=\rho(q_i)\cdot(h_ig_i^{-1})=\rho(q_i\cdot h_i)\cdot g_i^{-1}\\
&=\rho(q_i)\cdot g_i^{-1}=q_{\kappa(i)},
\end{align*} 
which implies that $g_ih_ig_i^{-1}\in H_{\kappa(i)}$. Since $q_{i}\cdot g_{\kappa(i)}=q_{\kappa^2(i)}\cdot g_{\kappa(i)}=\rho(q_{\kappa(i)})$, we have 
\begin{align*}
q_{i}\cdot (g_{\kappa(i)}g_i)&=\rho(q_{\kappa(i)})\cdot g_i=\rho(q_{\kappa(i)}\cdot g_i)\\
&=\rho^2(q_i)=q_i.
\end{align*}
By lemma~\ref{lemm:symmetric qdle}, $s_{q_i}^{-1}=s_{\rho(q_i)}=s_{q_{\kappa(i)}\cdot g_i}=g_i^{-1}s_{q_{\kappa(i)}}g_i$.

Hence, the symmetric quandle $(\sqcup_{i\in I}H_i\backslash G;z_i,r_i\mid i\in I)$ can be defined. We show that $(\sqcup_{i\in I}H_i\backslash G;z_i,r_i\mid i\in I)$ is symmetric quandle isomorphic to $(Q,\rho)$.

It is known that $\psi:\sqcup_{i\in I}H_i\backslash G\to Q:H_ix\mapsto q_i\cdot x$ is a quandle isomorphism \cite{Joy}. For $H_ix\in \sqcup_{i\in I}H_i\backslash G$, we have 
\begin{align*}
\psi\circ\rho_{\bm r}(H_ix)&=\psi(H_{\kappa(i)}r_ix)=q_{\kappa(i)}\cdot(r_ix)\\
&=\rho(q_i)\cdot x=\rho(q_i\cdot x)\\
&=\rho\circ\psi(H_ix).
\end{align*}
Thus, $\psi$ is a symmetric quandle isomorphism.
\end{proof}
\begin{remark}
Let $(Q,\rho)$ be a symmetric quandle. The subgroup of ${\rm Aut}(Q,\rho)$ generated by $\{ s_a\mid a\in Q\}$ is called the {\it inner automorphism group} of $(Q,\rho)$ and denoted by ${\rm Inn}(Q,\rho)$. If we replace ${\rm Aut}(Q,\rho)$ to ${\rm Inn}(Q,\rho)$, the argument holds as in Theorem~\ref{theo:main2}.
\end{remark}
\begin{exam}
{\rm
Let $G$ be a group. Suppose that $G$ acts on $G$ from the left by $g\cdot x=x^{-1}gx$. Let $G=\sqcup_{i\in I}G_i$ be the orbit decomposition by the action of $G$. For each $i$, we fix an element $g_i\in G_i$. We assume that the set $\{ g_i\mid i\in I\}$ is closed under the inversion, that is, ${\rm Inv}(G)(\{ g_i\mid i\in I\})=\{ g_i\mid i\in I\}$. Let us define the subgroup $H_i$ by $\{ x\in G\mid xg_i=g_ix\}$ for each $i\in I$. 

Let $\kappa:I\to I$ be the involution which satisfies $g_{\kappa(i)}={\rm Inv}(G)(g_i)=g_i^{-1}$ for any $i\in I$. We put $r_i=e$ for each $i\in I$. Then, the symmetric quandle $(\sqcup_{i\in I}H_i\backslash G;z_i,r_i\mid i\in I)$ can be defined. We define $\psi$ as follows: $\psi(H_ix):=g_i\cdot x=x^{-1}g_ix\ (H_ix\in \sqcup_{i\in I}H_i\backslash G)$. Since $h_i^{-1}g_ih_i=g_i$ for any $h_i\in H_i$, $\psi:\sqcup_{i\in I}H_i\backslash G\to G$ is well defined. By the definition of $H_i$ and $\psi$, it holds that $\psi$ is a bijection. For each $H_ix,H_jy\in \sqcup_{i\in I}H_i\backslash G$, we have
\begin{eqnarray*}
\psi(H_ix\ast H_iy)&=&\psi(H_ixy^{-1}g_jy)\\
&=&g_i\cdot(xy^{-1}g_jy)\\
&=&(xy^{-1}g_jy)^{-1}g_i(xy^{-1}g_jy)\\
&=&(y^{-1}g_jy)^{-1}(x^{-1}g_ix)(y^{-1}g_jy)\\
&=&(g_j\cdot y)^{-1}(g_i\cdot x)(g_j\cdot y)=\psi(H_jy)^{-1}\psi(H_ix)\psi(H_jy).
\end{eqnarray*}
Thus, we see that $\psi$ is a quandle homomorphism from $(\sqcup_{i\in I}H_i\backslash G;z_i\mid i\in I)$ to ${\rm Conj}(G)$. Furthermore, it holds that
\begin{eqnarray*}
\psi\circ\rho_{\bm r}(H_ix)&=&\psi(H_{\kappa(i)}x)\\
&=&g_{\kappa(i)}\cdot x\\
&=&g_i^{-1}\cdot x\\
&=&x^{-1}g_i^{-1}x\\
&=&(g_i\cdot x)^{-1}={\rm Inv}(G)(g_i\cdot x)={\rm Inv}(G)\circ\psi(H_ix)
\end{eqnarray*}
for any $H_ix\in \sqcup_{i\in I}H_i\backslash G$. Hence, the symmetric quandle $(\sqcup_{i\in I}H_i\backslash G;z_i,r_i\mid i\in I)$ is symmetric quandle isomorphic to the symmetric conjugation quandle $({\rm Conj}(G),{\rm Inv}(G))$.
}
\end{exam}
\begin{exam}
{\rm
Let $\Z_4$ be the cyclic group of order $4$. Let us define an operation $\ast$ on $\Z_4$ by $x\ast y=2y-x$. Then, this pair $R_4=(\Z_4,\ast)$ is a quandle, which is called the {\it dihedral quandle} of order $4$. We define an involution $\rho_{\rm anti}:R_4\to R_4$ by $\rho_{\rm anti}(x)=x+2$. Then, $\rho_{\rm anti}$ is a good involution, which is called the {\it antipodal map}~\cite{Kamada2010homo}.

Let $G$ be the quaternion group. We note that $\langle a,b,c\mid a^2=b^2=c^2=abc, a^4=e\rangle$ is a presentation of $G$, where $e$ is the identity element of $G$. Thus, it holds that $ab=c,bc=a$ and $ca=b$ in $G$.

 Let $H_1$ be the subgroup of $G$ generated by $a$ and $H_2$ be the subgroup of $G$ generated by $b$. Then, it holds that $|G|=8$ and $|H_1|=|H_2|=4$. By direct calculations, we have $H_1\backslash G=\{ H_1e,H_1b\}$ and $H_2\backslash G=\{ H_2e,H_2a\}$. We put $I=\{1,2\}$, $z_1=a$ and $z_2=b$. Since $H_i$ is generated by $z_i$, the quandle $(\sqcup_{i\in I}H_i\backslash G;z_i\mid i\in I)$ can be defined. Then, we have 
\begin{eqnarray*}
&&H_1e\ast H_1b=H_1b^{-1}ab=H_1b^{-1}c=H_1a^{-1}=H_1e,\\
&&H_1e\ast H_2e=H_1b,\\
&&H_2e\ast H_2a=H_2a^{-1}ba=H_2c^{-1}a=H_2b^{-1}=H_2e\ \rm{and}\\
&&H_2e\ast H_1e=H_2a.
\end{eqnarray*}
Hence, a map $\psi:H_1\backslash G\sqcup H_2\backslash G\to R_4$ defined by $\psi(H_1e)=0,\psi(H_1a)=2,\psi(H_2e)=1$ and $\psi(H_2b)=3$ is a quandle isomorphism.

Let $\kappa:I\to I$ be the identity map. We put $r_1=b$ and $r_2=a$. Since $aba^{-1}=b^{-1}$ and $bab^{-1}=a^{-1}$, we can define the symmetric quandle $(\sqcup_{i\in I}H_i\backslash G;z_i,r_i\mid i\in I)$. By the definition of $\rho_{\bm r}$, we have $\rho_{\bm r}(H_1e)=H_1b$ and $\rho_{\bm r}(H_2e)=H_2a$. Hence, $\psi$ is a symmetric quandle isomorphism from $(\sqcup_{i\in I}H_i\backslash G;z_i,r_i\mid i\in I)$ to $(R_4,\rho_{\rm anti})$.
}
\end{exam}

\bibliographystyle{plain}
\bibliography{reference}

\address{{\scriptsize DEPARTMENT OF MATHEMATICS, GRADUATE SCHOOL OF SCIENCE, OSAKA UNIVERSITY, 1-1, MACHIKANEYAMA, TOYONAKA, OSAKA, 560-0043, JAPAN}}

\email{u660451k@ecs.osaka-u.ac.jp} 
\end{document}